\newcommand{\N}{{\mathbb N}}
\newcommand{\argmin}{\mathop{\mathrm{argmin}}}
\DeclarePairedDelimiter\abs{\lvert}{\rvert}
\theoremstyle{thmstyleone}%
\newtheorem{theorem}{Theorem}
\newtheorem{corollary}{Corollary}
\newtheorem{proposition}[theorem]{Proposition}%
\theoremstyle{thmstyletwo}%
\newtheorem{remark}{Remark}%
\newtheorem{hypothesis}{Hypothesis}
\theoremstyle{thmstylethree}%
\newtheorem{definition}{Definition}%
\begin{document}

\title[HPO: theoretical results]{Theoretical aspects in penalty hyperparameters optimization}

\author*[1,2]{\fnm{Flavia} \sur{Esposito}}\email{flavia.esposito@uniba.it}
\equalcont{These authors contributed equally to this work.}

\author[1,2]{\fnm{Laura} \sur{Selicato}}\email{laura.selicato@uniba.it}
\equalcont{These authors contributed equally to this work.}

\author[1,3]{\fnm{Caterina} \sur{Sportelli}}\email{caterina.sportelli@uniba.it}

\equalcont{These authors contributed equally to this work.}

\affil*[1]{\orgdiv{Department of Mathematics, Università degli Studi di Bari Aldo Moro}, 
\orgaddress{\street{via Orabona, 4}, \city{Bari}, \postcode{70125}, 
\country{Italy}}}

\affil[2]{\orgname{Member of INDAM research group, GNCS}}
\affil[3]{\orgname{Member of INDAM research group, GNAMPA}}

\abstract{Learning processes are useful methodologies able to improve knowledge of real phenomena. These are often dependent on hyperparameters, variables set before the training process and regulating the learning procedure. Hyperparameters optimization problem is an open issue in learning approaches since it can strongly affect any real data analysis. They are usually selected using Grid-Search or Cross Validation techniques. No automatic tuning procedure exists especially if we focus on an unsupervised learning scenario.\\
This study aims to assess some theoretical considerations for tuning penalty hyperparameters in optimization problems. It considers a bi-level formulation tuning problem in an unsupervised context, by using Gradient-based methods. Suitable conditions for the existence of a minimizer in an infinite-dimensional Hilbert space are outlined, together with some theoretical results, applicable in all those situations when it is unnecessary or not possible obtaining an exact minimizer. An iterative algorithmic strategy is considered, equipped with a stopping criterion via Ekeland's variational principle.}


\keywords{Hyperparameters optimization, learning approaches, existence results.}

\pacs[MSC Classification]{68Q32, 46N10, 90C46, 49J27, 90C48}

\maketitle

\section*{Acknowledgment} The authors would like to thank Prof. A. M. Candela and Prof. N. Del Buono from Università degli Studi di Bari Aldo Moro for the deep discussions on the preliminary version of this manuscript. This work was supported by INDAM-GNCS.

\section{Introduction}
Training a Machine Learning (ML) algorithm is quite important to produce data-driven models, which can be successfully applied in real life applications. These processes often require to specify several variables by the users, namely hyperparameters, which must be set before the learning procedure starts. 
Hyperparameters govern the whole learning process and play a crucial role in guaranteeing good model performances. They are often manually specified, and the lack of an automatic tuning procedure makes the field of Hyperparameter Optimization (HPO) an ever-evolving topic. 
The literature offers various solutions for hyperparameters tuning, from Gradient-based to Black-Box or Bayesian's approaches, beside some naive but daily used methods such as Grid and Random search. A brief overview on existing methods can be found in~\cite{nostro}.
Hyperparameters can be of different types (discrete, continuous, categorical), and in most cases, the number of their configurations to explore is infinite. This paves the way for a mathematical formalization of the HPO in ML context with abstract spaces, such as Hilbert spaces.

A supervised learning algorithm may be represented as a mapping that takes a configuration of hyperparameter and a dataset $D$ and returns an hypothesis \cite{franceschi2021unified}:
\begin{equation}
    \mathcal{A} : \Lambda \times  D \to \mathcal{H}; \qquad \mathcal{A}(\lambda, D) = h, \quad \text{with} \quad D= \bigcup\limits_{N \in \mathbb{N}} (X \times Y)^N
\end{equation}
where $D$ is the space of finite dimensional dataset, representing a task, $X$ and $Y$ are the input and output spaces, $\Lambda$ is an hyperparameter space, and $\mathcal{H}$ is an hypothesis space. 
A quite standard claim for the hypotheses set is to be a linear function space, endowed with a suitable norm (more binding arising from an inner product): two requirements satisfied when $\mathcal{H}$ is a Hilbert space of functions over $X$\footnote{If $X$ is an infinite dimensional space the boundedness is needed, too.}. Assuming an Hilbert space structure on the hypothesis space has some advantages: (i) practical computations reduced to ordinary linear algebra operations and (ii) self duality; that is for any $x\in X$ a representative of $x$ can be found, i.e., $\mathcal{k}_x\in \mathcal{H}$ exists such that
\begin{equation} \label{scalarp}
h(x) = \langle \mathcal{k}_x, h\rangle\quad\mbox{ for all } h\in\mathcal{H}, 
\end{equation}
where $\mathcal{k}_x$ is a suitable positive definite ``kernel''. This construction gives the chance to connect the abstract structure of $\mathcal{H}$ and what its elements actually are, flipping the construction of the hypotheses set from the kernel. Providing a suitable positive function $k$ on $X$, $\mathcal{H}$ can be set as the minimal complete space of functions involving all $\{k_x\}_{x\in X}$ equipped with the scalar product in \eqref{scalarp}. Thus, $\mathcal{H}$ is outlined in a unique way, and it is named the Representing Kernel Hilbert Space mapped to the kernel $k$.

Starting from this abstract 
scenario, one can deepen the HPO in supervised ML. Formally, HPO can be formulated as the problem of minimizing the discrepancy between $\mathcal{A}$, trained on a given training dataset $D_{tr}$, and a validation dataset $D_{val}$~\cite{feurer2019hyperparameter}, to find the optimal $\lambda^*$ such that


\begin{equation} \label{eq1}
\lambda^*=\argmin_{\lambda \in \Lambda}
\mathscr{V}({\mathcal{A}}({\lambda}, D_{tr}), D_{val}), \quad \text{where} \quad  \mathscr{V}: \mathcal{H} \times X \to \mathbb{R}.
\end{equation}
\\

In this study, we will address problem \eqref{eq1} through Gradient-based methods (GB), by using a bi-level approach. Bi-level programming solves an outer optimization problem subject to the optimality of an inner optimization problem, and it can be adopted to formalize HPO for any learning algorithm~\cite{bard2013practical, colson2007overview,bertrand2020implicit,phdfranceschi}. We will work on Hilbert spaces for solving HPO in unsupervised problems,  considering as hyperparameter the penalty coefficient. We already treat this aspect in the particular and more specific case of Nonnegative Matrix Factorization task, encurring in some generalization problems and restrictions of the theorems' assumptions~\cite{nostro_Z}.

To overcome the difficulties in ensuring the theoretical assumptions when real data domains are considered, this work extends existence and uniqueness theorems for the solution of the hyperparameters bi-level problem to the more general framework of infinite dimensional Hilbert space. This latter also allows the application of the Ekeland's variational principle to state that whenever a functional is not guaranteed to have a minimum, under suitable assumptions, a “good” substitute can be found, namely the best one can get as an approximate minimum.
One of the purposes of this paper is to use this theoretical tool as a stopping criterion for the update of the hyperparameters as we will see later. 




The outline of the paper is as follows. Section \ref{sec:back} introduces the classical bi-level formalization of HPO and some preliminary notions in a supervised context. Section \ref{sec:prop} illustrates our proposal, extension on the unsupervised context. 
A general framework addressing HPO in Hilbert space is also set, and some general abstract tools are stated in Section \ref{sec:main}.  
Section \ref{sec:discussion} presents a critical discussion and some practical considerations. Finally, Section \ref{sec:conclusions} summarizes the obtained results and draws some conclusions.

\section{Previous works and preliminaries}\label{sec:back}
As briefly mentioned in the introduction, in a supervised learning scenario, HPO can be addressed through a bi-level formulation. This approach looks for the hyperparameters $\lambda$ such that the minimization of the regularized training leads to the best performance of the trained data-driven model on a validation set.   
Accordingly to the ideas introduced in~\cite{franceschi2017forward,vincent2018online}, the best hyperparameters for a data learning task can be selected as the solution of the following problem: 
\begin{align}\label{two}
\min\{J(\lambda): \lambda \in \Lambda\},\\
  J(\lambda) = \inf \{\mathcal{E}(w_{\lambda}, \lambda): w_{\lambda} \in \argmin\limits_{u \in \mathbb{R}^{r}}\mathscr{L}_{\lambda}(u)\},\label{twoo}
\end{align}
where $w \in \mathbb{R} ^r$ are $r$ parameters, $J: \Lambda \to \mathbb{R} $ is the so-called $Response$ $Function$ of the outer problem $\mathcal{E}:\mathbb{R} ^r \times \Lambda \to \mathbb{R}$, and for every $\lambda \in \Lambda \subset \mathbb{R}^p$, $\mathcal{L}_\lambda:\mathbb{R} ^r \to \mathbb{R} $ is the inner problem.
\\
A reformulation of HPO as a bi-level optimization problem 
is also solved via some GB algorithms.
In particular, in GB methods HPO is addressed with classical procedure for continuous optimization, in which the hyperparameter update is given by

\begin{equation}\label{lambdaupdate}
    \lambda_{t+1} = \lambda_t - \alpha \mathbf{h}_t(\lambda)
\end{equation}

where $\mathbf{h}_t$ is 
an approximation of the gradient of function $J$ and $\alpha$ is a step size. It is known that the main challenge in this context is the computation of $\mathbf{h}_t$, called hypergradient. In several cases, a numerical approximation of the hypergradient can be calculated for real-valued hyperparameters, although few learning algorithms are differentiable in the classical sense.\\
There are two main strategies for computing the hypergradient: iterative differentiation \cite{franceschi2017forward, fra2, Mc} and implicit differentiation \cite{Pedro,lorraine2020optimizing}.
The former requires calculating the exact gradient of an approximate objective. This is defined through the recursive application of an optimization dynamics that aims to replace and approximate the learning algorithm $\mathcal{A}$; the latter involves the numerical application of the implicit function theorem to the solution mapping $\mathcal{A}(D_{tr}; \cdot)$, when it is expressible through an appropriate equation \cite{franceschi2021unified}.

In this study, we follow the iterative strategy, so that problem in \eqref{two}-\eqref{twoo} can be addressed through a dynamical system type approach.

If the following hypothesis hold:
\begin{hypothesis}\label{Hyphot}\quad
\begin{enumerate}
    \item the set $\Lambda$ is a compact subset of $\mathbb{R}$;
    \item the Error Function $\mathcal{E} : \mathbb{R}^{r} \times \Lambda \to \mathbb{R}$ is jointly continuous;
    \item the map $(w, \lambda) \to \mathscr{L}_{\lambda}(w)$ is jointly continuous, and then the problem \\$\argmin \mathscr{L}_{\lambda}$ is a singleton for every $\lambda \in \Lambda$;
    \item the problem $w_{\lambda} = \argmin \mathscr{L}_{\lambda}$ remains bounded as $\lambda$ varies in $\Lambda$;
\end{enumerate}
\end{hypothesis}
the problem in \eqref{two}-\eqref{twoo} becomes:
\begin{equation}\label{tre}
    \min\limits_{\lambda \in \Lambda} J(\lambda) = \mathcal{E}(w^*_{\lambda}, \lambda), \quad w^*_{\lambda} = \argmin\limits_u \mathcal{L}_{\lambda}(u).
\end{equation}
It can be proved that the optimal solution $(w_{\lambda^*},\lambda^*)$ of problem \eqref{tre} exists~\cite{fra2}.\\
The goal of HPO is to minimize the validation error of model $g_{w}: X \to Y$, parameterized by a vector $w \in \mathbb{R} ^r$,  with respect to  hyperparameters $\lambda$.\\
Considering the penalty optimization problems in which hyperparameter is the penalty coefficient $\lambda\in\mathbb{R}_+$, the $Inner$ $Problem$ is the penalized empirical error represented by $\mathcal{L}$, defined as:
\begin{equation}\label{ell}
    \mathcal{L}_{\lambda} (w) = \sum\limits_{(x,y) \in D_{tr}} \ell (g_w (x),y) +  \lambda \mathcal{r}(w),
\end{equation}
where $\ell$ is the loss function, $D_{tr}=\{(\mathbf{x}_i,y_i)\}_{i=1}^n$ the training set, and $\mathcal{r}: \mathbb{R}^r \to \mathbb{R}$ is a penalty function.
While the $Outer$ $Problem$ is the generalized error of $g_w$ represented by $\mathcal{E}$: 
\begin{equation}
    \mathcal{E}(w,\lambda) = \sum\limits_{(x,y) \in D_{val}} \ell (g_w (x),y), 
\end{equation}
where $D_{val}=\{(\mathbf{x}_i,y_i)\}_{i=1}^n$ is the validation set. Note that $\mathcal{E}$ does not explicitly depend on $\lambda$.

This work will allow overcoming some assumptions of Hypothesis \ref{Hyphot} (such as compactness) that are difficult to satisfy in real data learning contexts, and also to use some theoretical result as the Ekeland's variational principle, stated in the following, to improve iterative algorithms.

\begin{theorem}[Ekeland's variational principle] \cite{ekeland1974variational}   \label{Eke}
Let $(V, d)$ be a complete metric space and $J:V\to\bar{\mathbb{R} }$ be a lower semi-continuous function which is bounded from below. Suppose that $\varepsilon>0$ and $\tilde{v}\in V$ exist such that
\[
J(\tilde{v})\leq \inf_V J +\varepsilon.
\]
Then, given any $\rho>0$, $v_{\rho}\in V$ exists such that
\[
J(v_{\rho})\leq J(\tilde{v}),\qquad d(v_{\rho}, \tilde{v})\leq\frac{\varepsilon}{\rho},
\]
and
\[
J(v_{\rho})<J(v)+\rho\, d(v_{\rho}, v) \qquad\forall\; v\neq v_{\rho}.
\]
\end{theorem}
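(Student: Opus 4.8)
The plan is to realize the desired point $v_\rho$ as a minimal element of a partial order on $V$ that encodes the conclusion itself. I would define, for $u,w\in V$,
\[
w\preceq u \quad\Longleftrightarrow\quad J(w)+\rho\,d(u,w)\le J(u).
\]
This relation is reflexive and transitive by the triangle inequality, and antisymmetric because $w\preceq u$ together with $u\preceq w$ force $2\rho\,d(u,w)\le 0$, hence $u=w$. The key observation is that the third conclusion, $J(v_\rho)<J(v)+\rho\,d(v_\rho,v)$ for every $v\neq v_\rho$, says precisely that $v_\rho$ is \emph{minimal} for $\preceq$, i.e. no $v\neq v_\rho$ satisfies $v\preceq v_\rho$. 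Thus the whole statement reduces to producing a $\preceq$-minimal element lying below $\tilde v$.

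To construct it I would iterate. Set $v_0=\tilde v$ and, given $v_n$, consider the nonempty sublevel set $S_n=\{w\in V: w\preceq v_n\}$, which contains $v_n$. Since $J$ is bounded from below, $m_n:=\inf_{S_n}J$ is finite, so I may choose $v_{n+1}\in S_n$ with
\[
J(v_{n+1})\le m_n+2^{-(n+1)}.
\]
Because $v_{n+1}\preceq v_n$ we have $\rho\,d(v_n,v_{n+1})\le J(v_n)-J(v_{n+1})$; the sequence $\bigl(J(v_n)\bigr)_n$ is nonincreasing and bounded below, so its successive differences telescope to a finite sum and therefore $\sum_n d(v_n,v_{n+1})<\infty$. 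Hence $(v_n)_n$ is Cauchy, and completeness of $V$ yields a limit $v_\rho$.

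It then remains to verify the three claims. Passing to the limit in $v_{n+p}\preceq v_n$ and using lower semicontinuity (so that $\liminf_p J(v_{n+p})\ge J(v_\rho)$) together with continuity of $d(v_n,\cdot)$ shows $v_\rho\preceq v_n$ for every $n$; in particular $v_\rho\preceq v_0=\tilde v$, that is
\[
J(v_\rho)+\rho\,d(\tilde v,v_\rho)\le J(\tilde v).
\]
This single inequality delivers at once $J(v_\rho)\le J(\tilde v)$ and, combined with $J(\tilde v)\le\inf_V J+\varepsilon\le J(v_\rho)+\varepsilon$, the sharp bound $d(v_\rho,\tilde v)\le\varepsilon/\rho$. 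For minimality, suppose $v\preceq v_\rho$; by transitivity $v\preceq v_n$, so $v\in S_n$ and $J(v)\ge m_n\ge J(v_{n+1})-2^{-(n+1)}\ge J(v_\rho)-2^{-(n+1)}$ for all $n$, whence $J(v)\ge J(v_\rho)$. Together with $J(v)+\rho\,d(v_\rho,v)\le J(v_\rho)$ this forces $d(v_\rho,v)=0$, i.e. $v=v_\rho$, which is exactly the strict inequality in the statement.

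I expect the main obstacle to be the careful interchange of the limit with the partial order: establishing $v_\rho\preceq v_n$ for all $n$ requires using lower semicontinuity for the $J$-term while exploiting continuity of the metric in the $d(v_n,\cdot)$-term, and one must keep track of the telescoping estimate that simultaneously guarantees the Cauchy property and the exact distance bound $\varepsilon/\rho$. An alternative route avoiding the explicit construction would invoke Zorn's lemma on $(V,\preceq)$ after checking that every chain admits an upper bound (again via completeness and lower semicontinuity); the iterative argument, however, is more transparent and better suited to the algorithmic use of the principle made later in the paper.
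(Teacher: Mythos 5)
Your proof is correct. Note, however, that the paper itself offers no proof of this statement: it is quoted verbatim as a known result with a citation to Ekeland's original 1974 article, so there is no internal argument to compare against. What you have written is essentially the classical proof (Ekeland's own, in the Bishop--Phelps ordering formulation): the partial order $w\preceq u \iff J(w)+\rho\,d(u,w)\le J(u)$, the iterative near-minimization $J(v_{n+1})\le \inf_{S_n}J+2^{-(n+1)}$, the telescoping bound $\rho\sum_n d(v_n,v_{n+1})\le J(v_0)-\lim_n J(v_n)<\infty$ giving the Cauchy property, and the passage to the limit where lower semicontinuity controls the $J$-term and continuity of the metric controls the $d$-term. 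All the delicate points are handled correctly: antisymmetry via $2\rho\,d(u,w)\le 0$, the derivation of $v_\rho\preceq v_n$ for every $n$, the bound $d(v_\rho,\tilde v)\le\varepsilon/\rho$ from $\inf_V J\le J(v_\rho)$, and the minimality argument using $J(v)\ge m_n\ge J(v_\rho)-2^{-(n+1)}$. The only cosmetic caveat is the implicit assumption that $\inf_V J$ is finite (i.e., $J\not\equiv+\infty$), needed so that $J(v_0)<+\infty$ and the telescoping makes sense; this is standard and harmless, but worth stating since $J$ takes values in $\bar{\mathbb{R}}$.
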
 

\section{Our Proposal}\label{sec:prop}

The bi-level HPO framework can be modified to include unsupervised learning paradigms, generally designed to detect some useful latent structure embedded in data. Tuning hyperparameters for unsupervised learning models is more complex than the supervised case due to the lack of the output space, which defines the ground truth collected in the validation set.    

This section describes a general framework to address HPO in Hilbert spaces for the unsupervised case and a corollary of the Ekeland's variational principle used to derive a useful stopping criterion for iterative algorithms solving this HPO. \\
Let $X \in \mathbb{R} ^{n \times m}$ be a data matrix, with reference to the problem \eqref{two}-\eqref{twoo}, where now $J: \Lambda \to \mathbb{R} $ is a suitable functional and $\Lambda$ a Hilbert space equipped with the scalar product $(\cdot, \cdot)$, the outer problem is:
\begin{equation}\label{E}
  \mathcal{E}:\mathbb{R} ^r \times \Lambda \to \mathbb{R}  \qquad  \mathcal{E}(w,\lambda) = \sum\limits_{x \in X} \ell (g_w (x)), 
\end{equation}
and for every $\lambda \in \Lambda$ the inner problem is:
\begin{equation}\label{L}
    \mathcal{L}:\mathbb{R} ^r \to \mathbb{R}  \qquad \mathcal{L}_{\lambda} (w) = \sum\limits_{x \in X} \ell (g_w (x)) + \mathcal{R}(\lambda, w),
\end{equation}
where $\mathcal{R}: \Lambda \times \mathbb{R}^r \to \mathbb{R}$ is a penalty function.
We want to emphasize the new formulation with respect to \eqref{ell} 
regarding the function $\mathcal{L}_\lambda$, in which each component of the parameter $w$ is penalized independently, and all optimization is performed on the data matrix $X$. 

The bi-level problem associated to \eqref{E}-\eqref{L} can be solved with a dynamical system approach in which the hypergradient is computed. Once the hypergradient is achieved a gradient-based approach can be used to find the optimum $\lambda^*$.
The Ekeland's variational principle can be used to construct an appropriate stopping criterion for iterative algorithms, with the aim of justifying and setting the hyperparameters related to the stopping criterion more appropriately.
Roughly speaking, this variational principle asserts that, under assumptions of lower semi-continuity and boundedness from below, if a point $\tilde{\lambda}$ is an “almost minimum point” for a function $J$, hence a small perturbation of $J$ exists which attains its minimum at a point “near” to $\tilde{\lambda}$. As a fruitful selection of $\rho$ occurs when $\rho =\sqrt{\varepsilon}$ and such a choice allows us to reduce the number of hyperparameters to the precision error only, thus we will use Theorem \ref{Eke} in the following form. 

\begin{corollary} \label{coroll}
Let $(V, d)$ be a complete metric space and 
$J: \Lambda \to\bar{\mathbb{R} }$ be a lower semi-continuous function which is bounded from below. Suppose that $\varepsilon>0$ and $\tilde{\lambda}\in \Lambda$ exist such that
\[
J(\tilde{\lambda})\leq \inf_{\Lambda} J +\varepsilon.
\]
Then, $\tilde{z}\in \Lambda$ exists such that
\[
J(\tilde{z})\leq J(\tilde{\lambda}),\qquad d(\tilde{z}, \tilde{\lambda})\leq\sqrt{\varepsilon}.
\]
and
\[
J(\tilde{z})<J(\lambda)+\sqrt{\varepsilon}\, d(\tilde{z}, \lambda) \quad\forall\; \lambda \neq \tilde{z}.
\]
\end{corollary}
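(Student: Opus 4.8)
The plan is to derive the corollary as an immediate specialization of Theorem~\ref{Eke}, the only substantive observation being the particular choice of the free parameter $\rho$. First I would fix the metric space: here $V=\Lambda$, the Hilbert space equipped with the scalar product $(\cdot,\cdot)$, which is a complete metric space under the induced metric $d(\lambda,\mu)=\sqrt{(\lambda-\mu,\lambda-\mu)}$. Since $J$ is assumed lower semi-continuous and bounded from below, and since $\tilde\lambda$ satisfies $J(\tilde\lambda)\leq\inf_\Lambda J+\varepsilon$, every hypothesis of Theorem~\ref{Eke} is already in force.

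Next I would apply Theorem~\ref{Eke} with the specific value $\rho=\sqrt{\varepsilon}$ (legitimate since $\varepsilon>0$ forces $\sqrt{\varepsilon}>0$), and rename the resulting point $v_\rho$ as $\tilde z$ and the starting point $\tilde v$ as $\tilde\lambda$. The first conclusion $J(\tilde z)\leq J(\tilde\lambda)$ transfers verbatim. For the second, the distance bound of Theorem~\ref{Eke} reads $d(v_\rho,\tilde v)\leq\varepsilon/\rho$, and the key elementary computation $\varepsilon/\rho=\varepsilon/\sqrt{\varepsilon}=\sqrt{\varepsilon}$ yields exactly $d(\tilde z,\tilde\lambda)\leq\sqrt{\varepsilon}$. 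For the third, substituting $\rho=\sqrt{\varepsilon}$ into the strict inequality $J(v_\rho)<J(v)+\rho\,d(v_\rho,v)$ valid for all $v\neq v_\rho$ gives $J(\tilde z)<J(\lambda)+\sqrt{\varepsilon}\,d(\tilde z,\lambda)$ for all $\lambda\neq\tilde z$, as claimed.

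There is essentially no analytic obstacle here beyond bookkeeping: the corollary is Ekeland's principle read with a single, deliberately balanced choice of $\rho$. The only point deserving attention is conceptual rather than technical, and is precisely what makes the statement worth recording separately: the choice $\rho=\sqrt{\varepsilon}$ simultaneously controls both the distance $d(\tilde z,\tilde\lambda)$ and the perturbation coefficient in the final inequality by the same quantity $\sqrt{\varepsilon}$, so that a single tolerance $\varepsilon$ governs every error term. This is what renders the corollary convenient as a stopping criterion, reducing the tunable quantities to the precision error alone. Since Theorem~\ref{Eke} is quoted as given, no further analysis is required, and the corollary follows directly from the substitution.
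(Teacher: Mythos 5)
Your proof is correct and matches the paper's own route exactly: the paper introduces the corollary precisely as Theorem~\ref{Eke} specialized to $\rho=\sqrt{\varepsilon}$, so that $\varepsilon/\rho=\sqrt{\varepsilon}$ controls both the distance bound and the perturbation coefficient. Your identification of $V$ with $\Lambda$ (resolving the statement's notational slip) and the substitution computation are exactly what the paper intends, so nothing further is needed.
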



\section{Main Abstract results}
\label{sec:main}
In this section, we are ready to weaken the assumptions we discussed earlier and provide results related to the use of the Ekeland\textcolor{blue}{'s} principle as a stopping criterion. 
We mention an abstract result of the existence of a minimizer in Hilbert spaces which has great importance and a wide range of applications in several fields.
As just one example, the Riesz's Representation Theorem, even if implicitly, makes use of the existence of a minimizer~\cite{walter1987real}. This is a widely relevant issue about Hilbert spaces, which makes them nicer than Banach spaces or other topological vector spaces. One can think, for example, that the whole Dirac Bra--ket formalism of quantum mechanics relies on this identification. 
\subsection{Abstract Existence Theorem}
It is well known that each bounded sequence in a normed space $\Lambda$ has a norm convergent subsequence if and only if it is a finite dimensional normed space. 
Thus, given a normed space $\Lambda$, as the strong topology (i.e., the one induced by the norm) is too strong to provide any widely appropriate subsequential extraction procedure, one can consider other weak topologies joined with the linear structure of the space and look for subsequential extraction processes therein.\\
In Banach spaces, as well as in Hilbert spaces, the two most relevant weaker-than-norm topologies are the weak-star topology and the weak topology. As the former is established in dual spaces, the latter is set up in every normed space. The notions of these topologies are not self-contained but fulfill a leading role in many features of the Banach space theory. In this regard, here we state some results we will use shortly.
\begin{theorem} \label{noto}
If $\Lambda$ is a finite-dimensional space, the strong and weak topologies coincide. In particular, it follows that the weak topology is normable, and then clearly metrizable, too.\\
If $\Lambda$ is an infinite-dimensional space, the weak topology is strictly
contained in the strong topology, namely open sets for the strong
topology exist which are not open for the weak topology. Furthermore, the weak
topology turns to be not metrizable in this case. 
\end{theorem}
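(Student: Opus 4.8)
The plan is to separate the two cases and lean on one universal fact: the weak topology $\tau_w$ is by definition the coarsest topology on $\Lambda$ making every $f\in\Lambda^*$ continuous, and since each such $f$ is already norm-continuous, one always has the inclusion $\tau_w\subseteq\tau_s$, where $\tau_s$ denotes the norm topology. The whole statement thus reduces to deciding when the reverse inclusion holds and whether $\tau_w$ carries a compatible metric. For the finite-dimensional case I would fix a basis $e_1,\dots,e_n$ with its coordinate functionals $f_1,\dots,f_n\in\Lambda^*$; as all norms on a finite-dimensional space are equivalent, $\tau_s$ coincides with the topology induced by $\|x\|_\infty=\max_i|f_i(x)|$, whose basic balls $\{x:|f_i(x-x_0)|<\varepsilon,\ i=1,\dots,n\}$ are exactly basic weak neighborhoods. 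Hence every norm-open set is weakly open, giving $\tau_s\subseteq\tau_w$ and therefore $\tau_s=\tau_w$; normability and metrizability of $\tau_w$ are then inherited verbatim from the norm.

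For the infinite-dimensional case I would first prove the inclusion is strict by exhibiting the open unit ball $B=\{x:\|x\|<1\}$ as a norm-open set that is not weakly open. The key lemma is that every basic weak neighborhood of $0$, of the shape $U=\{x:|f_i(x)|<\varepsilon,\ i=1,\dots,k\}$, contains the subspace $N=\bigcap_{i=1}^{k}\ker f_i$. Since $N$ is the kernel of the linear map $x\mapsto(f_1(x),\dots,f_k(x))\in\mathbb{R}^k$, it has codimension at most $k$, so in an infinite-dimensional $\Lambda$ it is nonzero (indeed infinite-dimensional). A nontrivial subspace is unbounded, so no weak neighborhood of $0$ can sit inside $B$; thus $B\notin\tau_w$ while $B\in\tau_s$, proving $\tau_w\subsetneq\tau_s$.

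The non-metrizability I would establish by contradiction. If $\tau_w$ were metrizable it would admit a countable neighborhood base $\{V_n\}$ at $0$, and each $V_n$ would contain a basic weak neighborhood $W_n$ determined by a finite family $F_n\subseteq\Lambda^*$. Setting $\mathcal{F}=\bigcup_n F_n$, a countable set, I would show $\Lambda^*=\operatorname{span}\mathcal{F}$: given any $g\in\Lambda^*$, the neighborhood $\{x:|g(x)|<1\}$ contains some $V_n\supseteq W_n\supseteq N_n:=\bigcap_{f\in F_n}\ker f$, and since $g$ is bounded on the subspace $N_n$ it must vanish on it, whence the elementary fact that a functional annihilating a finite intersection of kernels lies in their span yields $g\in\operatorname{span}F_n$. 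Consequently the dual $\Lambda^*$ — which is always a Banach space — would have a countable algebraic spanning set. But $\Lambda$ infinite-dimensional forces $\Lambda^*$ infinite-dimensional (extend the coordinate functionals of a linearly independent sequence by Hahn--Banach), and an infinite-dimensional Banach space cannot have countable Hamel dimension, for otherwise it would be a countable union of finite-dimensional, hence closed and nowhere-dense, subspaces, contradicting the Baire category theorem.

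The main obstacle is precisely this last step. The finite-dimensional equivalence and the unit-ball argument are essentially elementary topology and linear algebra, but the non-metrizability genuinely requires completeness of the dual: one must carefully convert the hypothetical metric into a countable local base, pass from that base to a countable spanning family of $\Lambda^*$, and only then invoke Baire category and Hahn--Banach to reach the contradiction. I would therefore spend most of the care on verifying that the countable family $\mathcal{F}$ really spans $\Lambda^*$ and on the category argument, which are the two non-formal ingredients.
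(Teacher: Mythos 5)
Your proof is correct, but note first that the paper itself offers no proof of Theorem \ref{noto}: it is stated as classical background (``here we state some results we will use shortly''), so there is no internal argument to compare against. Your blind reconstruction is the canonical textbook proof (essentially the one in Brezis's \emph{Functional Analysis}), and all three ingredients check out. The finite-dimensional half is right: $\tau_w\subseteq\tau_s$ always holds since every $f\in\Lambda^*$ is norm-continuous, and equivalence of norms makes the $\|\cdot\|_\infty$-balls, which are basic weak neighborhoods, a base for $\tau_s$. The strictness argument via the open unit ball is the standard one, and the key lemma --- every basic weak neighborhood of $0$ contains the finite-codimensional subspace $\bigcap_{i=1}^{k}\ker f_i$, which is nonzero and hence unbounded in infinite dimensions --- is stated and used correctly. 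The non-metrizability argument is also complete: a compatible metric would give a countable local base, each member absorbing a basic weak neighborhood $W_n$ with finite defining family $F_n$; a functional $g$ with $|g|<1$ on the subspace $N_n=\bigcap_{f\in F_n}\ker f$ must vanish there (homogeneity), and the linear-algebra fact that a functional annihilating $\bigcap_{f\in F_n}\ker f$ lies in $\operatorname{span}F_n$ then forces $\Lambda^*$ to have a countable Hamel spanning set, contradicting Baire category in the Banach space $\Lambda^*$ (correctly noting that $\Lambda^*$ is complete even if $\Lambda$ is merely normed). The only step you compress is the Hahn--Banach claim that $\dim\Lambda=\infty$ implies $\dim\Lambda^*=\infty$: to make it airtight, define for each $n$ a functional on $E_n=\operatorname{span}\{x_1,\dots,x_n\}$ sending $x_n\mapsto 1$ and $x_i\mapsto 0$ for $i<n$ (automatically bounded on the finite-dimensional $E_n$), extend it to $f_n\in\Lambda^*$, and verify linear independence of $(f_n)_n$ by evaluating any vanishing combination at $x_{n_1}$ with $n_1$ the smallest index; this is routine but worth writing out since the na\"ive ``coordinate functionals of the sequence'' need not be bounded on the full span. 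With that detail filled in, the proof stands as a complete and correct justification of a statement the paper takes on faith.
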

\begin{definition}
A functional $J:\Lambda\to\bar{\mathbb{R} }$ with $\Lambda$ topological space, is said to be lower semi-continuous on $\Lambda$ if for each $a\in\mathbb{R} $, the sublevel sets
\[
J^{-1}(]-\infty, a]) =\{\lambda\in \Lambda: J(\lambda)\le a\}
\]
are closed subsets of $\Lambda$. 
\end{definition}
 In the following we introduce a ``generalized Weierstrass Theorem" which gives a criteria for the existence of a minimum for a functional defined on a Hilbert space. For this reason, the incoming results will be provided for the abstract framework of a Hilbert space although, in some cases, they apply in the more general context of Banach spaces. Thus, throughout the remaining part of this section we denote by $\Lambda$ any real infinite dimensional Hilbert space.
 
\noindent In an infinite dimensional setting, the following definitions are strictly related to the different notions of weak and strong topology.
\begin{definition}
A functional $J:\Lambda\to\bar{\mathbb{R} }$ is said to be strongly (weakly, respectively) lower semi-continuous if $J$ is lower semi-continuous when $\Lambda$ is equipped with the strong (weak, respectively) topology.
\end{definition}
\begin{definition} \label{swl}
A functional $J:\Lambda\to\bar{\mathbb{R} }$ is said to be strongly (weakly, respectively) sequentially lower semi-continuous if
\[
\liminf_{n\to +\infty} J(\lambda_n)\ge J(\lambda)
\]
for any sequence $(\lambda_n)_n\subset \Lambda$ such that $\lambda_n\to \lambda$ ($\lambda_n\rightharpoonup \lambda$, respectively).
\end{definition}
We proceed by providing some useful results.
\begin{proposition}
The following statements are equivalent:
\begin{itemize}
    \item[i)] $J:\Lambda\to\mathbb{R} $ is sequentially weakly lower semi-continuous functional;
    \item[ii)] the epigraph of $J$ is weakly sequentially closed, where, by definition, it is
    \[
    {\rm epi}(J) = \{(\lambda, t)\in {\rm dom}(J)\times\mathbb{R}  : J(\lambda)\le t\}.
    \]
\end{itemize}
\end{proposition}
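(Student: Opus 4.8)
The plan is to establish the two implications separately, viewing $\mathrm{epi}(J)\subset\Lambda\times\mathbb{R}$ as a subset of the product Hilbert space endowed with its natural weak topology. The central observation throughout is that the second factor $\mathbb{R}$ is finite-dimensional, so by Theorem \ref{noto} weak and strong convergence coincide there; consequently a sequence $(\lambda_n, t_n)$ converges weakly to $(\lambda, t)$ in $\Lambda\times\mathbb{R}$ precisely when $\lambda_n\rightharpoonup\lambda$ in $\Lambda$ and $t_n\to t$ in $\mathbb{R}$. I would record this product-convergence fact at the outset, since both directions rely on it.

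First I would prove i)$\Rightarrow$ii). Assuming $J$ is sequentially weakly lower semi-continuous, I take an arbitrary sequence $(\lambda_n, t_n)\in\mathrm{epi}(J)$ with $(\lambda_n, t_n)\rightharpoonup(\lambda, t)$ and aim to show $(\lambda, t)\in\mathrm{epi}(J)$. By the membership condition $J(\lambda_n)\le t_n$ for every $n$; passing to the $\liminf$ and invoking Definition \ref{swl} gives $J(\lambda)\le\liminf_n J(\lambda_n)\le\liminf_n t_n=t$, where the last equality uses $t_n\to t$. Hence $(\lambda, t)\in\mathrm{epi}(J)$, which is exactly weak sequential closedness.

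For ii)$\Rightarrow$i), I assume the epigraph is weakly sequentially closed and fix a sequence $\lambda_n\rightharpoonup\lambda$. Writing $\ell:=\liminf_n J(\lambda_n)$, the goal is $J(\lambda)\le\ell$ (the case $\ell=+\infty$ being trivial). I would extract a subsequence $(\lambda_{n_k})$ along which $J(\lambda_{n_k})\to\ell$; the pairs $(\lambda_{n_k}, J(\lambda_{n_k}))$ lie in $\mathrm{epi}(J)$ and, by the product-convergence remark together with $\lambda_{n_k}\rightharpoonup\lambda$, converge weakly to $(\lambda, \ell)$. Closedness then forces $(\lambda, \ell)\in\mathrm{epi}(J)$, i.e. $J(\lambda)\le\ell$, which is the defining inequality of sequential weak lower semi-continuity.

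The step I expect to require the most care is the identification of weak convergence in $\Lambda\times\mathbb{R}$ with componentwise convergence: this is where finite-dimensionality of the real factor is essential, and one must be careful that a subsequence of a weakly convergent sequence still converges to the same weak limit $\lambda$, so that the extracted pairs genuinely converge weakly to $(\lambda,\ell)$. Beyond this, both implications are a direct transcription of the definitions, with no further analytic input needed.
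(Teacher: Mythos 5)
The paper states this proposition without proof, so there is no in-paper argument to compare against; your proof is the standard epigraph characterization of sequential weak lower semi-continuity, and its structure is sound. Your preliminary observation is the right one: since $\mathbb{R}$ is finite-dimensional (Theorem \ref{noto}), $(\lambda_n,t_n)\rightharpoonup(\lambda,t)$ in $\Lambda\times\mathbb{R}$ exactly when $\lambda_n\rightharpoonup\lambda$ in $\Lambda$ and $t_n\to t$ in $\mathbb{R}$, and with this in hand the implication i)$\Rightarrow$ii) is a direct transcription of Definition \ref{swl}. Your remark that a subsequence of a weakly convergent sequence retains the same weak limit is also correct and needs no further justification.

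There is, however, one genuinely missing case in ii)$\Rightarrow$i): you dispose of $\ell:=\liminf_n J(\lambda_n)=+\infty$ as trivial but never address $\ell=-\infty$, which is possible since nothing in the hypotheses bounds $J$ from below along the sequence. In that case your extracted pairs $(\lambda_{n_k},J(\lambda_{n_k}))$ have divergent second components, so they do not converge weakly in $\Lambda\times\mathbb{R}$ and the closedness hypothesis cannot be invoked as written; yet since $J$ is real-valued, the target inequality $J(\lambda)\le\ell$ would be false, so this case must be actively excluded rather than skipped. The patch is one line: if $\ell=-\infty$, fix an arbitrary $t\in\mathbb{R}$ and pass to a subsequence with $J(\lambda_{n_k})\le t$ for all $k$; then $(\lambda_{n_k},t)\in{\rm epi}(J)$ and $(\lambda_{n_k},t)\rightharpoonup(\lambda,t)$, so weak sequential closedness yields $J(\lambda)\le t$ for every real $t$, contradicting $J(\lambda)\in\mathbb{R}$. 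Hence $\ell>-\infty$, and your argument for finite $\ell$ then completes the proof.
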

\begin{remark}
As a further consequence of the preliminary Theorem \ref{noto}, we have that sequential weak lower semi-continuity and weak lower semi-continuity do not match if $\Lambda$ is infinite dimensional since weak topology is not metrizable. However, the weaker concept of sequential weak lower semi-continuity meets our needs.
\end{remark}
\begin{proposition} \label{Ccc}
Let $\cal{C}\subseteq$ $\Lambda$ be a closed and convex subset. Then, $\cal{C}$ is weakly sequentially closed, too.
\end{proposition}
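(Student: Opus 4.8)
The plan is to prove the stronger statement that $\mathcal{C}$ is weakly closed and then observe that weak closedness immediately entails weak sequential closedness. Indeed, in any topological space a convergent sequence whose terms lie in a closed set has its limit in that set, so once $\mathcal{C}$ is shown to be closed in the weak topology, every weakly convergent sequence $(\lambda_n)_n \subset \mathcal{C}$ with $\lambda_n \rightharpoonup \lambda$ automatically satisfies $\lambda \in \mathcal{C}$, which is exactly the asserted weak sequential closedness (Definition \ref{swl}).

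To establish weak closedness I would show that the complement $\Lambda \setminus \mathcal{C}$ is weakly open. Fix $\lambda_0 \notin \mathcal{C}$. Since $\mathcal{C}$ is closed and convex, while $\{\lambda_0\}$ is compact and convex with $\{\lambda_0\} \cap \mathcal{C} = \emptyset$, the geometric (second) form of the Hahn--Banach theorem supplies a continuous linear functional $f \in \Lambda^*$ and a real number $\alpha$ with $f(\lambda_0) < \alpha \le f(x)$ for every $x \in \mathcal{C}$. The set $U = \{\lambda \in \Lambda : f(\lambda) < \alpha\}$ is then a weak neighborhood of $\lambda_0$ disjoint from $\mathcal{C}$; the crucial point here is that $f$, being norm-continuous and linear, is by the very definition of the weak topology weakly continuous, so $U$ is genuinely weakly open. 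Hence every point outside $\mathcal{C}$ admits a weak neighborhood missing $\mathcal{C}$, which forces $\Lambda \setminus \mathcal{C}$ to be weakly open and $\mathcal{C}$ to be weakly closed.

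Because the ambient space $\Lambda$ is a Hilbert space, I would alternatively argue directly and make the separating functional fully explicit through the projection theorem, bypassing Hahn--Banach. Suppose $(\lambda_n)_n \subset \mathcal{C}$ with $\lambda_n \rightharpoonup \lambda$, and assume for contradiction that $\lambda \notin \mathcal{C}$. Let $P\lambda$ be the nearest point of $\mathcal{C}$ to $\lambda$, which exists and is unique precisely because $\mathcal{C}$ is closed and convex, and set $v = \lambda - P\lambda \ne 0$. The variational characterization of the projection gives $(v, x - P\lambda) \le 0$ for all $x \in \mathcal{C}$; applying this to $x = \lambda_n$ and letting $n \to +\infty$, the weak convergence turns $(v, \lambda_n) \le (v, P\lambda)$ into $(v, \lambda) \le (v, P\lambda)$, that is $\|v\|^2 = (v, \lambda - P\lambda) \le 0$, contradicting $v \ne 0$.

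The routine parts are the verification that the separating functional yields a genuine weak neighborhood and the passage to the limit along $(v, \lambda_n)$. The only genuinely load-bearing ingredient, and hence the step I expect to be the main obstacle to state cleanly, is the separation itself: guaranteeing that a point outside a closed convex set can be strictly separated from it by a continuous linear functional. This rests on Hahn--Banach in the general Banach setting and on the existence together with the variational inequality of the metric projection in the Hilbert setting; everything else in the argument is a formal consequence of the definition of the weak topology.
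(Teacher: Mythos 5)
Your proposal is correct, and in fact it proves more than is needed. Note that the paper states Proposition \ref{Ccc} without any proof at all, treating it as a standard fact (it is essentially Mazur's theorem), so there is no in-paper argument to match; you supply two complete and valid proofs. Your Hahn--Banach route is the canonical one: strict separation of a point $\lambda_0 \notin \mathcal{C}$ from the closed convex set $\mathcal{C}$ produces a weakly open half-space $U=\{f<\alpha\}$ containing $\lambda_0$ and missing $\mathcal{C}$, so $\mathcal{C}$ is weakly \emph{closed} --- a strictly stronger conclusion than the weak sequential closedness asserted, and the right thing to prove, since in infinite dimensions the weak topology is not metrizable (cf. Theorem \ref{noto}) and sequential closedness would not conversely imply closedness. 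Your second, Hilbert-specific argument via the metric projection is arguably even better adapted to the paper's setting, where $\Lambda$ is explicitly a Hilbert space: the variational inequality $(\lambda - P\lambda,\, x - P\lambda) \le 0$ for all $x \in \mathcal{C}$, evaluated at $x = \lambda_n$ and passed to the limit using $\lambda_n \rightharpoonup \lambda$, yields $\|\lambda - P\lambda\|^2 \le 0$ directly, avoiding Hahn--Banach entirely and making the separating functional explicit. Only trivial caveats remain: the projection argument needs $\mathcal{C}$ nonempty (the empty case is vacuously true), and your diagnosis that the separation step is the sole load-bearing ingredient is accurate. Either of your two arguments would serve as a proof the paper could have included verbatim.
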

Since a sequentially weakly closed set is also strongly closed, it follows that a sequentially weakly lower semi-continuous functional is also (strongly) lower semi-continuous. Instead, the converse holds under an additional assumption.
In particular, Proposition \ref{Ccc} allows us to infer the following results.
\begin{proposition} \label{help}
If $J:\Lambda\to\mathbb{R} $ is a strongly lower semi-continuous convex functional; thus $J$ is weakly sequentially lower semi-continuous, too.
\end{proposition}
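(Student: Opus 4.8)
The plan is to reduce weak sequential lower semi-continuity to the weak sequential closedness of the sublevel sets, which is precisely what Proposition \ref{Ccc} delivers once convexity is brought into play. First I would record the elementary observation that, for convex $J$, every sublevel set
\[
S_a := \{\lambda\in\Lambda : J(\lambda)\le a\}, \qquad a\in\mathbb{R},
\]
is convex: if $J(\lambda_1)\le a$ and $J(\lambda_2)\le a$, then for $t\in[0,1]$ convexity gives $J(t\lambda_1+(1-t)\lambda_2)\le tJ(\lambda_1)+(1-t)J(\lambda_2)\le a$. Moreover, strong lower semi-continuity of $J$ means exactly that each $S_a$ is strongly closed. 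Hence every $S_a$ is a closed and convex subset of $\Lambda$, so Proposition \ref{Ccc} applies and yields that each $S_a$ is weakly sequentially closed.

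Next I would verify the defining inequality of Definition \ref{swl} directly. Fix any sequence $(\lambda_n)_n\subset\Lambda$ with $\lambda_n\rightharpoonup\lambda$, and set $L:=\liminf_{n\to+\infty}J(\lambda_n)$; the goal is $J(\lambda)\le L$. If $L=+\infty$ there is nothing to prove, so assume $L<+\infty$ and fix an arbitrary $a>L$. By definition of the $\liminf$, there is a subsequence $(\lambda_{n_k})_k$ with $J(\lambda_{n_k})\to L$, hence $J(\lambda_{n_k})\le a$ for all large $k$, i.e.\ eventually $\lambda_{n_k}\in S_a$. Since any subsequence of a weakly convergent sequence converges weakly to the same limit, $\lambda_{n_k}\rightharpoonup\lambda$, and the weak sequential closedness of $S_a$ forces $\lambda\in S_a$, that is $J(\lambda)\le a$. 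As $a>L$ was arbitrary, letting $a\downarrow L$ gives $J(\lambda)\le L$, which is exactly the required inequality.

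The only point that needs care — and the step I expect to be the main obstacle — is the passage through the $\liminf$ via subsequences: one must select the subsequence realizing the $\liminf$, tail-truncate it so that its terms genuinely lie in the chosen sublevel set, and only then invoke that weak convergence is inherited by subsequences before applying Proposition \ref{Ccc}. Notably, no metrizability of the weak topology is used anywhere, which is essential since $\Lambda$ is infinite dimensional and, by Theorem \ref{noto}, its weak topology is not metrizable; the argument rests solely on the \emph{sequential} formulation. An alternative, slightly more compact route would apply Proposition \ref{Ccc} to the epigraph ${\rm epi}(J)\subset\Lambda\times\mathbb{R}$, which is convex by convexity of $J$ and strongly closed by strong lower semi-continuity, and then quote the earlier proposition identifying weak sequential closedness of the epigraph with sequential weak lower semi-continuity; I would nonetheless keep the sublevel-set version as the primary argument, since it invokes Proposition \ref{Ccc} in precisely the form stated for subsets of $\Lambda$.
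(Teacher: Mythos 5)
Your proof is correct, but it takes a different route from the paper's. The paper's proof is two lines long and is exactly the ``alternative, slightly more compact route'' you sketch at the end: since $J$ is lower semi-continuous and convex, ${\rm epi}(J)$ is closed and convex, Proposition \ref{Ccc} gives that ${\rm epi}(J)$ is weakly sequentially closed, and the earlier (unnumbered) proposition identifying sequential weak lower semi-continuity with weak sequential closedness of the epigraph finishes the job. Your primary argument instead works with the sublevel sets $S_a$ and verifies Definition \ref{swl} by hand, via the subsequence realizing the $\liminf$ and the tail-truncation into $S_a$. What each buys: the paper's proof is shorter, but it quietly applies Proposition \ref{Ccc} to a subset of $\Lambda\times\mathbb{R}$ rather than of $\Lambda$ (legitimate, since $\Lambda\times\mathbb{R}$ is again a Hilbert space, but strictly beyond the letter of the proposition as stated — a subtlety you correctly flag), and it also leans on the equivalence proposition as a black box, whose ``only if'' direction contains essentially the same $\liminf$-extraction argument you write out explicitly. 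Your route has the additional advantage that, under the paper's own definition of lower semi-continuity (closedness of the sublevel sets), strong lower semi-continuity \emph{immediately} gives that each $S_a$ is strongly closed, whereas the paper's step ``$J$ lsc, thus ${\rm epi}(J)$ closed'' is itself a small tacit translation. One minor point: you dismiss only the case $L=+\infty$, but $L=-\infty$ deserves a word too; your argument in fact handles it, since then $J(\lambda)\le a$ would follow for every real $a$, which is impossible for a real-valued $J$, so no weakly convergent sequence with $\liminf_{n\to+\infty}J(\lambda_n)=-\infty$ can exist. With that clause added, the proof is complete and fully self-contained.
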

\begin{proof}
Since $J$ is lower semi-continuous, thus ${\rm epi}(J)$ is closed. On the other hand, since $J$ is convex, so it is ${\rm epi}(J)$, whence Proposition \ref{Ccc} ensures that ${\rm epi}(J)$ is weakly sequentially closed, i.e., $J$ is weakly sequentially lower semi-continuous.
\end{proof}
Thus, we are able to state the main result of this section.
\begin{theorem}  \label{Hilbert}
Let $\cal{C}\subset$ $\Lambda$ be a non-empty, closed, bounded and convex subset. Let $J:\Lambda\to\mathbb{R} $ be a lower semi-continuous and convex functional. Thus $J$ achieves its minimum in $\cal{C}$, i.e., $\bar{\lambda}\in \cal{C}$ exists such that $J(\bar{\lambda}) =\displaystyle\inf_{\lambda\in \cal{C}} J(\lambda)$.
\end{theorem}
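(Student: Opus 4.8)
The plan is to run the classical direct method of the calculus of variations, substituting for norm compactness — which, by Theorem \ref{noto}, is unavailable in infinite dimension — the weaker notion of weak sequential compactness. First I would set $m := \inf_{\lambda \in \mathcal{C}} J(\lambda)$; since $\mathcal{C}$ is non-empty and $J$ is real-valued, certainly $m < +\infty$, while a priori $m \in [-\infty, +\infty)$ (establishing $m > -\infty$ will be a by-product of the argument). I would then fix a minimizing sequence $(\lambda_n)_n \subset \mathcal{C}$, i.e.\ one with $J(\lambda_n) \to m$.

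The crucial step is the extraction of a convergent subsequence. Because $\mathcal{C}$ is bounded, $(\lambda_n)_n$ is a bounded sequence in the Hilbert space $\Lambda$; since every Hilbert space is reflexive, bounded sequences admit weakly convergent subsequences, so there exist $(\lambda_{n_k})_k$ and a limit $\bar{\lambda} \in \Lambda$ with $\lambda_{n_k} \rightharpoonup \bar{\lambda}$. I expect this to be the main obstacle of the proof: in infinite dimension one cannot fall back on norm-compactness, and the very existence of the weak limit rests on the weak sequential compactness of bounded sets (Banach--Alaoglu together with Eberlein--\v{S}mulian, or the self-dual Hilbert-space argument), which is the genuinely non-elementary ingredient underlying the statement.

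Once the weak limit is produced, the remaining steps are the two propositions already at hand. I would first locate $\bar{\lambda}$ inside $\mathcal{C}$: being closed and convex, $\mathcal{C}$ is weakly sequentially closed by Proposition \ref{Ccc}, whence $\bar{\lambda} \in \mathcal{C}$. It then remains to pass the infimum to the limit. As $J$ is convex and strongly lower semi-continuous, Proposition \ref{help} renders it weakly sequentially lower semi-continuous, so Definition \ref{swl} applied to $\lambda_{n_k} \rightharpoonup \bar{\lambda}$ gives
\[
J(\bar{\lambda}) \le \liminf_{k\to+\infty} J(\lambda_{n_k}) = \lim_{k\to+\infty} J(\lambda_{n_k}) = m .
\]
On the other hand $\bar{\lambda} \in \mathcal{C}$ forces $J(\bar{\lambda}) \ge m$. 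Combining the two inequalities yields $J(\bar{\lambda}) = m$ (in particular $m > -\infty$), so the infimum over $\mathcal{C}$ is attained at $\bar{\lambda}$, which completes the argument.
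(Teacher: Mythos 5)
Your proposal is correct and follows essentially the same route as the paper's own proof: a minimizing sequence, weak sequential compactness of the bounded set $\mathcal{C}$ to extract $\lambda_{n_k} \rightharpoonup \bar{\lambda}$, Proposition \ref{Ccc} to conclude $\bar{\lambda}\in\mathcal{C}$, and Proposition \ref{help} with Definition \ref{swl} to pass to the limit. If anything, you are slightly more careful than the paper, making explicit the reflexivity argument behind the weak extraction and the finiteness of the infimum, both of which the paper leaves implicit.
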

\begin{proof}
Let $m:=\displaystyle\inf_{\lambda\in \cal{C}} J(\lambda)$; hence $(\lambda_n)_n\subset \cal{C}$ exists such that
\begin{equation} \label{Jtom}
   J(\lambda_n)\to m \quad\mbox{ as } n\to +\infty.
\end{equation}
Now, our boundness assumption on $\cal{C}$ implies that, up to subsequences, $\lambda\in\mathcal{H}$ exists such that $\lambda_n\rightharpoonup \lambda$ as $n\to +\infty$. Actually, since $\cal{C}$ is a closed and convex subset of $\Lambda$, thus Proposition \ref{Ccc} applies, which guarantees that $\lambda\in \cal{C}$.\\
Finally, from \eqref{Jtom}, Proposition \ref{help} and Definition \ref{swl} we infer that $J(\bar{\lambda})\le m$, which gives the desired result. 
\end{proof}
\begin{remark}
If every closed and bounded subset in a metric space is compact, the space is said to have the Heine--Borel property. This property holds in every finite dimensional normed space but, in general, may not be true.
\end{remark}
\begin{remark}
We observe that Theorem \ref{Hilbert} still holds if the subset $\cal{C}$ is not bounded as long as we ask for an additional assumption on the functional $J$. In fact, requiring $J$ to be coercive\footnote{We say that a functional $J:H\to\mathbb{R} $ is coercive if $J(u)\to\infty$ as $\|u\|\to\infty, u\in H$.} (and if at least $\bar{\lambda}\in \cal{C}$ exists such that $J(\bar{\lambda})<+\infty$), then any minimizer of $J$ on $\cal{C}$ necessarily lies in some closed ball of radius $r>0$. In fact, since $J(\bar{\lambda})<+\infty$, any minimizer $\lambda$ of $J$ must have $J(\lambda)\le J(\bar{\lambda})$; furthermore, since $J$ is coercive, a sufficient large radius $r>0$ exists such that $J(\lambda)>J(\bar{\lambda})$ for all $\lambda\in\cal{C}$ with $\|\lambda\|> r$. Thus, any minimizer, if exists, lies in the ball $\{\lambda\in\mathcal{C}: \|\lambda\| \le r \}$.\\ In particular, Theorem \ref{Hilbert} applies to the intersection between $\cal{C}$ and a closed ball of suitable radius, since it turns to be convex if we formally require $\cal{C}$ to be closed and convex.
\end{remark}
Namely, the following result holds.
\begin{corollary}  \label{Hcor}
Let $\cal{C}\subset$ $\Lambda$ be a non-empty, closed and convex subset. Let $J:\Lambda\to\mathbb{R} $ be a lower semi-continuous, convex and coercive functional. Thus $J$ achieves its minimum, i.e., $\bar{\lambda}\in \cal{C}$ exists such that $J(\bar{\lambda}) =\displaystyle\inf_{\lambda\in \cal{C}} J(\lambda)$.
\end{corollary}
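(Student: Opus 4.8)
The plan is to reduce Corollary \ref{Hcor} to the bounded case already settled in Theorem \ref{Hilbert} by exploiting coercivity to trap a minimizer inside a closed ball, exactly along the lines sketched in the preceding remark. First I would fix a reference point: since $\mathcal{C}$ is non-empty I choose $\lambda_0\in\mathcal{C}$, and because $J$ is real-valued the value $J(\lambda_0)$ is finite, so the infimum $m:=\inf_{\lambda\in\mathcal{C}}J(\lambda)$ satisfies $m\le J(\lambda_0)<+\infty$.

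Next I would use coercivity to select the radius. By definition $J(\lambda)\to+\infty$ as $\|\lambda\|\to+\infty$, hence there exists $r>0$, which I may also take with $r\ge\|\lambda_0\|$, such that $J(\lambda)>J(\lambda_0)$ whenever $\|\lambda\|>r$. I then set $\mathcal{C}_r:=\{\lambda\in\mathcal{C}:\|\lambda\|\le r\}$ and check that it inherits every hypothesis required by Theorem \ref{Hilbert}: it is non-empty (it contains $\lambda_0$), closed (intersection of the closed set $\mathcal{C}$ with a closed ball), bounded (contained in a ball of radius $r$), and convex (intersection of two convex sets, a closed ball being convex). Applying Theorem \ref{Hilbert} to $J$ on $\mathcal{C}_r$ then yields $\bar{\lambda}\in\mathcal{C}_r$ with $J(\bar{\lambda})=\inf_{\lambda\in\mathcal{C}_r}J(\lambda)$.

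The last step is to upgrade this local attainment to a global one over all of $\mathcal{C}$. For $\lambda\in\mathcal{C}$ with $\|\lambda\|\le r$ we have $\lambda\in\mathcal{C}_r$, so $J(\lambda)\ge J(\bar{\lambda})$ by construction; for $\lambda\in\mathcal{C}$ with $\|\lambda\|>r$ the choice of $r$ gives $J(\lambda)>J(\lambda_0)\ge J(\bar{\lambda})$, using that $\lambda_0\in\mathcal{C}_r$. Combining the two cases shows $J(\bar{\lambda})\le J(\lambda)$ for every $\lambda\in\mathcal{C}$, i.e. $J(\bar{\lambda})=m$, as desired.

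I do not expect a serious obstacle here, since the heavy analytic content (weak sequential compactness of bounded sets and weak sequential lower semi-continuity of convex lower semi-continuous functionals) is already encapsulated in Theorem \ref{Hilbert}. The only points demanding care are the bookkeeping that certifies $\mathcal{C}_r$ is simultaneously closed, bounded and convex, and the clean case split transferring the minimizer of the truncated problem to the original one; in a fully extended-real-valued setting one would additionally have to impose $J(\lambda_0)<+\infty$ explicitly, but under the stated real-valuedness of $J$ this is automatic.
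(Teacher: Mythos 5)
Your proof is correct and follows essentially the same route as the paper, which does not give a formal proof of Corollary \ref{Hcor} but sketches exactly this argument in the preceding remark: use coercivity and a finite value $J(\lambda_0)$ to confine all candidate minimizers to a closed ball of radius $r$, apply Theorem \ref{Hilbert} to the non-empty, closed, bounded, convex truncation $\mathcal{C}_r=\mathcal{C}\cap\{\|\lambda\|\le r\}$, and transfer the minimizer back via the case split $\|\lambda\|\le r$ versus $\|\lambda\|>r$. Your write-up in fact supplies details the paper leaves implicit (choosing $r\ge\|\lambda_0\|$ to guarantee $\mathcal{C}_r\neq\emptyset$, and the explicit two-case verification of global optimality), so no gap remains.
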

Now we introduce a couple of results which are a direct consequence of Ekeland's variational principle. 
For the sake of completeness, here we provide them with all the details (see~\cite{drusvyatskiy2019nonsmooth} for the original statements).
Let $\Lambda$ be a complete metric space and $J : \Lambda \to \mathbb{R} $ be the lower semicontinuous response function on $\Lambda$. Supposte that a point $\lambda\in\Lambda$ exists such that $J(\lambda)<+\infty$. Thus, the following results hold. 
\begin{theorem}[Perturbation Result] \label{thm:nonsmooth}
Let $J_{\lambda} : \Lambda \to \bar{\mathbb{R} }$ be a lower semicontinuous function such that the inequality
\begin{equation}  \label{fleq}
    \abs{J_{\lambda} (\gamma ) - J(\gamma)} \le \zeta(d(\gamma,\lambda)) \quad \text{holds} \quad \forall \gamma \in \Lambda,
\end{equation}
where $J_{\lambda}(\cdot)$ denote \textit{model function}\footnote{As $model$ $function$ we mean the Taylor's expansion of $J$ in $\lambda$, stopped to the first order.}, $\zeta$ is some growth function\footnote{A differentiable univariate function $\zeta: \mathbb{R} _+ \to \mathbb{R} _+$ is called a growth function if it satisfies $\zeta(0) = \zeta'(0) = 0$ and $\zeta' > 0$ on $(0, +\infty)$. If in addition,
equalities $\lim\limits_{t \to 0}\zeta'(t) =\lim\limits_{t \to 0} \zeta(t)/\zeta'(t) =0$  hold, we say that $\zeta$ is a proper growth function.}, and let $\lambda^+$ be a minimizers of $J_{\lambda}$. If $\lambda^+$ coincides with $\lambda$, then $\abs{\nabla J(\lambda)}=0$. On the other hand, if $\lambda$ and $\lambda^+$ are distinct, then a point $\hat{\lambda} \in X$ exists which satisfies
\begin{enumerate}
    \item $d(\lambda^+,\hat{\lambda}) \leq 2 \cdot \frac{\zeta(d(\lambda^+, \lambda))}{\zeta'(d(\lambda^+, \lambda))} \quad$ (point proximity)
    \item $J(\hat{\lambda}) \leq J(\lambda^+) + \zeta(d(\lambda^+, \lambda)) \quad$ (value proximity).
\end{enumerate}
\end{theorem}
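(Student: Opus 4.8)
The plan is to separate the two cases, using Ekeland's principle (Theorem~\ref{Eke}) only for the second. When $\lambda^+$ coincides with $\lambda$, I would invoke the definition of the model function as the first-order Taylor expansion of $J$ at $\lambda$, so that $J_\lambda(\gamma)=J(\lambda)+\langle\nabla J(\lambda),\gamma-\lambda\rangle$. Since an affine functional with nonzero linear part is unbounded below---evaluating along $\gamma_t=\lambda-t\,\nabla J(\lambda)$ gives $J_\lambda(\gamma_t)=J(\lambda)-t\,\abs{\nabla J(\lambda)}^2\to-\infty$---it can possess a minimizer only if its gradient vanishes. As $\lambda$ is assumed to minimize $J_\lambda$, this forces $\abs{\nabla J(\lambda)}=0$, settling the first assertion without appeal to the variational principle.

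For $\lambda\neq\lambda^+$ set $r:=d(\lambda^+,\lambda)$ and introduce the perturbed functional $h(\gamma):=J(\gamma)+\zeta\big(d(\gamma,\lambda)\big)$. The pivotal step is to show that $\lambda^+$ is a $2\zeta(r)$-approximate minimizer of $h$. To this end I would chain, for an arbitrary $\gamma\in\Lambda$, the minimality $J_\lambda(\lambda^+)\le J_\lambda(\gamma)$ with the two one-sided forms of~\eqref{fleq}, namely $J(\lambda^+)-\zeta(r)\le J_\lambda(\lambda^+)$ and $J_\lambda(\gamma)\le J(\gamma)+\zeta(d(\gamma,\lambda))$. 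These combine into $J(\lambda^+)\le h(\gamma)+\zeta(r)$ for every $\gamma$; adding $\zeta(r)=\zeta\big(d(\lambda^+,\lambda)\big)$ on the left and taking the infimum over $\gamma$ yields $h(\lambda^+)\le\inf_\Lambda h+2\zeta(r)$. Here I use $\zeta(0)=0$, which makes $J_\lambda$ and $J$ agree at $\lambda$.

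I would then apply Ekeland's principle to $h$---lower semi-continuous because $J$ is lsc and $\gamma\mapsto\zeta(d(\gamma,\lambda))$ is continuous, and bounded below---with $\varepsilon=2\zeta(r)$, base point $\tilde v=\lambda^+$, and the free parameter calibrated to $\rho=\zeta'(r)>0$. The principle returns $\hat\lambda$ with $h(\hat\lambda)\le h(\lambda^+)$ and $d(\hat\lambda,\lambda^+)\le\varepsilon/\rho=2\zeta(r)/\zeta'(r)$, which is exactly the point-proximity bound~(1). For value proximity~(2) I would discard the nonnegative penalty, $J(\hat\lambda)\le J(\hat\lambda)+\zeta(d(\hat\lambda,\lambda))=h(\hat\lambda)\le h(\lambda^+)=J(\lambda^+)+\zeta(r)$. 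The remaining Ekeland inequality $h(\hat\lambda)<h(\gamma)+\zeta'(r)\,d(\hat\lambda,\gamma)$ records, if desired, an approximate-stationarity estimate for $J$ at $\hat\lambda$.

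The main obstacle is conceptual: one must not apply Ekeland to $J$ directly but to the perturbation $h=J+\zeta(d(\cdot,\lambda))$, since only this transfer converts the model's minimality at $\lambda^+$ into a bona fide approximate-minimization hypothesis. The factor $2$ in~(1) is intrinsic, arising from the two error contributions $\zeta(d(\lambda^+,\lambda))$ collected at $\lambda^+$ and at $\lambda$ rather than from any loose estimate, and the calibration $\rho=\zeta'(r)$ is what makes the stated radius emerge exactly. Verifying the structural hypotheses of Ekeland---completeness of $\Lambda$, lower semicontinuity and boundedness below of $J$ (the latter holding for the error functionals considered here)---is then routine.
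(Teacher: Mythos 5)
Your proof is correct and follows essentially the same route as the paper: you construct the same perturbed functional $G(\gamma)=J(\gamma)+\zeta\big(d(\gamma,\lambda)\big)$, show via the two one-sided forms of \eqref{fleq} that $\lambda^+$ is a $2\zeta\big(d(\lambda^+,\lambda)\big)$-approximate minimizer of it, and apply Theorem~\ref{Eke} with $\varepsilon=2\zeta\big(d(\lambda^+,\lambda)\big)$ and $\rho=\zeta'\big(d(\lambda^+,\lambda)\big)$ to obtain both proximity bounds. Your treatment of the case $\lambda^+=\lambda$ (the affine first-order model is unbounded below unless its linear part vanishes) is simply a more explicit rendering of the paper's one-line appeal to Taylor's theorem, and your value-proximity step of discarding the nonnegative penalty term is exactly what the paper leaves implicit.
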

\begin{proof}
By Taylor's theorem it is simple to verify that  $\abs{\nabla J_{\lambda}}(\lambda) = \abs{\nabla J}(\lambda)$. \\ Now, since $\lambda$ is a minimizer, we have $\abs{\nabla J(\lambda)}=0$ if $\lambda^+ = \lambda$. On the other hand, if $\lambda^+ \neq \lambda$, from inequality \eqref{fleq} and the definition of $\lambda^+$, it follows that
\begin{equation*}
    J(\gamma) \ge J_{\lambda}(\lambda^+) - \zeta (d(\gamma, \lambda)).
\end{equation*}
Let us define the new function
\[
G(\gamma):= J(\gamma) + \zeta(d(\gamma, \lambda)).
\]
Thus, from assumption \eqref{fleq} and inequality $\inf G \geq J_{\lambda}(\lambda^+)$ we infer that
\begin{equation*}
    G(\lambda^+) - \inf G \leq J(\lambda^+)- J_{\lambda}(\lambda^+) + \zeta(d(\lambda^+, \lambda)) \leq 2 \zeta(d(\lambda^+, \lambda)).
\end{equation*}
Whence, Theorem \ref{Eke} applies and, having $\varepsilon := 2 \zeta(d(\lambda^+, \lambda))$, for all $\rho >0$ $\lambda_{\rho}$ exists such that
\begin{equation*}
     G(\lambda_{\rho}) \leq G(\lambda^+) \quad\mbox{ and }\quad d(\lambda^+, \lambda_{\rho}) \leq \frac{\varepsilon}{\rho}.
\end{equation*}
The desired result follows simply by placing $\rho = \zeta'(d(\lambda^+, \lambda))$ with $\hat{\lambda}=\lambda_{\rho}$.
\end{proof}
An immediate consequence of Theorem \ref{thm:nonsmooth} is the following subsequence convergence result.

\begin{corollary}[Subsequence convergence to stationary points] \label{cor:sub}
Consider a sequence of points $\lambda_k$ and closed functions $J_{\lambda_k} : \Lambda \to \bar{\mathbb{R} }$ satisfying $\lambda_{k+1} = \argmin\limits_\gamma J_{\lambda_k} (\gamma)$ and $d(\lambda_{k+1}, \lambda_k) \to 0$. Moreover suppose that the inequality
\begin{equation}
    \abs{J_{\lambda_k}(\gamma) - J(\gamma)} \le \zeta(d(\lambda_k,\gamma)) \quad \text{holds} \quad \forall k\in\N \quad \text{and} \quad \gamma \in \Lambda,
\end{equation}
where $\zeta$ is a proper growth function. If $(\lambda^*, J(\lambda^*))$ is a limit point of the sequence $(\lambda_k, J(\lambda_k))$, then $\lambda^*$ is stationary for $J$.
\end{corollary}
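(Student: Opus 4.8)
The plan is to derive the statement as a direct consequence of the Perturbation Result (Theorem \ref{thm:nonsmooth}), applied at each step of the iteration, and then to pass to the limit along the subsequence that realizes the limit point $(\lambda^*, J(\lambda^*))$. Throughout I write $r_k := d(\lambda_{k+1}, \lambda_k)$ and recall two facts: $r_k \to 0$ by hypothesis, and since $\zeta$ is a \emph{proper} growth function we have both $\zeta(r_k) \to 0$ and $\zeta(r_k)/\zeta'(r_k) \to 0$ as $k \to +\infty$, together with $\zeta'(0) = 0$.

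First I would apply Theorem \ref{thm:nonsmooth} at each index $k$ with the identifications $\lambda = \lambda_k$ and $\lambda^+ = \lambda_{k+1} = \argmin_\gamma J_{\lambda_k}(\gamma)$. If $\lambda_{k+1} = \lambda_k$ the theorem already yields $\abs{\nabla J}(\lambda_k) = 0$ and I set $\hat{\lambda}_k := \lambda_k$; otherwise it produces a point $\hat{\lambda}_k$ satisfying the point proximity $d(\hat{\lambda}_k, \lambda_{k+1}) \le 2\,\zeta(r_k)/\zeta'(r_k)$ and the value proximity $J(\hat{\lambda}_k) \le J(\lambda_{k+1}) + \zeta(r_k)$. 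By the triangle inequality, $d(\hat{\lambda}_k, \lambda_k) \le r_k + 2\,\zeta(r_k)/\zeta'(r_k) \to 0$, so $\hat{\lambda}_k$ and $\lambda_k$ become arbitrarily close. The crucial extra information is that each $\hat{\lambda}_k$ is \emph{approximately stationary}: although this is not recorded in the statement of Theorem \ref{thm:nonsmooth}, it is built into its proof, where $\hat{\lambda}_k = \lambda_\rho$ is produced by Ekeland's principle applied to $G(\gamma) := J(\gamma) + \zeta(d(\gamma, \lambda_k))$ with $\rho = \zeta'(r_k)$. The third Ekeland inequality thus gives $\abs{\nabla G}(\hat{\lambda}_k) \le \zeta'(r_k)$, and since $\gamma \mapsto \zeta(d(\gamma, \lambda_k))$ is locally Lipschitz near $\hat{\lambda}_k$ with constant governed by $\zeta'(d(\hat{\lambda}_k, \lambda_k))$, the triangle inequality for the slope yields $\abs{\nabla J}(\hat{\lambda}_k) \le \zeta'(r_k) + \zeta'(d(\hat{\lambda}_k, \lambda_k)) \to 0$.

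It then remains to control the values. Since $\lambda_{k+1}$ minimizes $J_{\lambda_k}$ and the assumed bound $\abs{J_{\lambda_k}(\gamma) - J(\gamma)} \le \zeta(d(\lambda_k, \gamma))$ is an equality at $\gamma = \lambda_k$ (because $\zeta(0) = 0$), I would chain $J(\lambda_{k+1}) \le J_{\lambda_k}(\lambda_{k+1}) + \zeta(r_k) \le J_{\lambda_k}(\lambda_k) + \zeta(r_k) = J(\lambda_k) + \zeta(r_k)$, and combine this with value proximity to get $J(\hat{\lambda}_k) \le J(\lambda_k) + 2\,\zeta(r_k)$. Now let $(\lambda_{k_j})_j$ be the subsequence along which $\lambda_{k_j} \to \lambda^*$ and $J(\lambda_{k_j}) \to J(\lambda^*)$. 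The estimates above give $\hat{\lambda}_{k_j} \to \lambda^*$, while $\limsup_j J(\hat{\lambda}_{k_j}) \le \lim_j\big(J(\lambda_{k_j}) + 2\,\zeta(r_{k_j})\big) = J(\lambda^*)$ and $\liminf_j J(\hat{\lambda}_{k_j}) \ge J(\lambda^*)$ by lower semicontinuity of $J$; hence $J(\hat{\lambda}_{k_j}) \to J(\lambda^*)$. Thus $(\hat{\lambda}_{k_j})_j$ converges to $\lambda^*$ in both argument and value while $\abs{\nabla J}(\hat{\lambda}_{k_j}) \to 0$, which is exactly the (limiting) stationarity of $\lambda^*$ for $J$.

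I expect the main obstacle to be the approximate-stationarity step. One must reach back into the Ekeland-based construction underlying Theorem \ref{thm:nonsmooth} to extract the slope bound $\abs{\nabla G}(\hat{\lambda}_k) \le \zeta'(r_k)$, and then transfer it to $\abs{\nabla J}$ by carefully accounting for the slope of the perturbation $\zeta(d(\cdot, \lambda_k))$. It is precisely here that the properness of $\zeta$ is indispensable: the condition $\zeta(t)/\zeta'(t) \to 0$ forces $d(\hat{\lambda}_k, \lambda_k) \to 0$, and $\zeta'(0) = 0$ then drives both terms in the slope estimate to zero, so that stationarity survives the passage to the limit.
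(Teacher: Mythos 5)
Your proof is correct, and it follows the route the paper intends: the paper in fact offers no written proof of this corollary at all, presenting it only as ``an immediate consequence of Theorem \ref{thm:nonsmooth}.'' The real value of your write-up is that it exposes why that dismissal is too quick. As stated in the paper, Theorem \ref{thm:nonsmooth} records only point proximity and value proximity, and from those two conclusions alone the corollary does not follow --- neither says anything about the slope of $J$ at $\hat{\lambda}_k$. The original statement in \cite{drusvyatskiy2019nonsmooth} carries a third conclusion (near-stationarity, $\abs{\nabla J}(\hat{\lambda}) \le \zeta'(d(\hat{\lambda},\lambda)) + \zeta'(d(\lambda^+,\lambda))$) which the paper's version silently drops; you correctly identify this gap and repair it by reaching back into the Ekeland construction inside the proof of Theorem \ref{thm:nonsmooth}, where the third Ekeland inequality with $\rho = \zeta'(r_k)$ bounds the slope of $G$ and the perturbation $\zeta(d(\cdot,\lambda_k))$ contributes at most $\zeta'(d(\hat{\lambda}_k,\lambda_k))$. (This last transfer tacitly uses continuity of $\zeta'$ near the relevant radius, an assumption also implicit in the cited source --- worth a remark, but harmless.) The remaining bookkeeping is exactly right: $d(\hat{\lambda}_k,\lambda_k) \le r_k + 2\,\zeta(r_k)/\zeta'(r_k) \to 0$ by properness of $\zeta$; the value chain $J(\hat{\lambda}_k) \le J(\lambda_k) + 2\,\zeta(r_k)$ via $J_{\lambda_k}(\lambda_k) = J(\lambda_k)$ and minimality of $\lambda_{k+1}$; and the two-sided squeeze on $J(\hat{\lambda}_{k_j})$ using the standing lower semicontinuity of $J$. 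Your closing step --- producing $\hat{\lambda}_{k_j} \to \lambda^*$ with $J(\hat{\lambda}_{k_j}) \to J(\lambda^*)$ and $\abs{\nabla J}(\hat{\lambda}_{k_j}) \to 0$ --- is precisely stationarity in the limiting-slope sense, which the source reference (though not the paper) makes explicit; since the paper never defines ``stationary,'' spelling this out is a further improvement rather than a deviation. In short: same skeleton as the paper intends, but you supply the genuinely missing ingredient that makes ``immediate'' true.
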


Two interesting consequences for convergence analysis flow from there. Suppose that the models are chosen in such a way that the step-sizes $\|\lambda_{k+1} - \lambda_k\|$ tend to zero. This assumption is often enforced by ensuring that $J(\lambda_{k+1})< J(\lambda_k)$ by at least a multiple of $\|\lambda_{k+1} - \lambda_k\|^2$ (sufficient decrease condition). 
Then, assuming for simplicity that $J$ is continuous on its domain, any limit point $\lambda^*$ of the iterate sequence $\lambda_k$ will be stationary for the problem (Corollary \ref{cor:sub}).\\
Thus, by choosing an error $\varepsilon$, we can stop update \eqref{lambdaupdate} for GB algorithms in the context of bi-level HPO for penalty hyperparameter, according to the pseudo code in \ref{pcode}.

\begin{algorithm}
\caption{Pseudo-code}\label{pcode}
\begin{algorithmic}[1]
\Require Error $\varepsilon$. Some starting points $\lambda_0$, $\lambda_1$.
\Ensure Optimum $\lambda^*$ 
\While{$\|\lambda_{t} - \lambda_{t-1}\| > \varepsilon$}
    \State{Compute $\mathbf{h}(\lambda)$;}
    
    \State{update $\lambda_{t+1} = \lambda_t -\alpha \mathbf{h}_t(\lambda_t)$;}
    \State{$t+ = 1$.}
\EndWhile

\end{algorithmic}
\end{algorithm}


\section{Discussion and practical considerations}
\label{sec:discussion}
We want to emphasize that moving to infinite dimensional Hilbert spaces are not a mere abstract pretense, but it is also important in some application contexts. For example, when Support Vector Machine (SVM) are taken into consideration, a well known ``kernel trick” permits to interpret a Gaussian kernel as an inner product in a feature space. This is potentially infinite-dimensional, allowing to read the SVM classifier function as a linear function in the feature space~\cite{rossi2005classification}. Another example is provided by the quantum system possible states problem, in which the state of a free particle can be described as vectors residing in a complex separable Hilbert space~\cite{ying2016foundations}.\\
Indeed, the strength of this article lies in theory. Both the existence theorem and the stopping criterion allow us to build an approach based on solid mathematical foundations useful for future extensions and generalizations to other problems, too. For example, infinite-dimensional Covariance Descriptors (CovDs) for classification is a fertile application arena for the extensions developed here. This finds motivation in the fact that CovDs could be mapped to Reproducing Kernel Hilbert Space (RKHS) via the use of SPD-specific kernels~\cite{harandi2014bregman}.

\section{Conclusions}
\label{sec:conclusions}
In this paper, we studied the task of penalty HPO and we provided a mathematical formulation, based on Hilbert spaces, to address this issue in an unsupervised context.
\\Focusing on bi-level formulation, we showed some relaxed theoretical results both to weaken the hypotheses necessary for the existence of the solution.\\
Our approach differs from the more standard techniques in reducing the random or black box strategies giving stronger mathematical generalization suitable also when it is not possible obtaining exact minimizer.\\
We also propose to use the Ekeland's principle as a stopping criterion, which fits well in the context of GB methods.

\section*{Declarations}

\begin{itemize}
\item \textbf{Funding:} The author F. E. was funded by REFIN Project, grant number 363BB1F4, Reference project idea UNIBA027 ``Un modello numerico-matematico basato su metodologie di algebra lineare e multilineare per l'analisi di dati genomici".
The author C. S. was partially supported by PRIN project “Qualitative and quantitative aspects of nonlinear PDEs” (2017JPCAPN\_005) funded by Ministero dell’Istruzione, dell’Università e della Ricerca.
\item \textbf{Conflict of interest:} The authors have no relevant financial or non-financial interests to disclose.
\item \textbf{Data availability:} Data sharing not applicable to this article as no datasets were generated or analysed during the current study.
\end{itemize}

\bibliography{sn-bibliography}


\begin{thebibliography}{20}
\ifx \bisbn   \undefined \def \bisbn  #1{ISBN #1}\fi
\ifx \binits  \undefined \def \binits#1{#1}\fi
\ifx \bauthor  \undefined \def \bauthor#1{#1}\fi
\ifx \batitle  \undefined \def \batitle#1{#1}\fi
\ifx \bjtitle  \undefined \def \bjtitle#1{#1}\fi
\ifx \bvolume  \undefined \def \bvolume#1{\textbf{#1}}\fi
\ifx \byear  \undefined \def \byear#1{#1}\fi
\ifx \bissue  \undefined \def \bissue#1{#1}\fi
\ifx \bfpage  \undefined \def \bfpage#1{#1}\fi
\ifx \blpage  \undefined \def \blpage #1{#1}\fi
\ifx \burl  \undefined \def \burl#1{\textsf{#1}}\fi
\ifx \doiurl  \undefined \def \doiurl#1{\url{https://doi.org/#1}}\fi
\ifx \betal  \undefined \def \betal{\textit{et al.}}\fi
\ifx \binstitute  \undefined \def \binstitute#1{#1}\fi
\ifx \binstitutionaled  \undefined \def \binstitutionaled#1{#1}\fi
\ifx \bctitle  \undefined \def \bctitle#1{#1}\fi
\ifx \beditor  \undefined \def \beditor#1{#1}\fi
\ifx \bpublisher  \undefined \def \bpublisher#1{#1}\fi
\ifx \bbtitle  \undefined \def \bbtitle#1{#1}\fi
\ifx \bedition  \undefined \def \bedition#1{#1}\fi
\ifx \bseriesno  \undefined \def \bseriesno#1{#1}\fi
\ifx \blocation  \undefined \def \blocation#1{#1}\fi
\ifx \bsertitle  \undefined \def \bsertitle#1{#1}\fi
\ifx \bsnm \undefined \def \bsnm#1{#1}\fi
\ifx \bsuffix \undefined \def \bsuffix#1{#1}\fi
\ifx \bparticle \undefined \def \bparticle#1{#1}\fi
\ifx \barticle \undefined \def \barticle#1{#1}\fi
\bibcommenthead
\ifx \bconfdate \undefined \def \bconfdate #1{#1}\fi
\ifx \botherref \undefined \def \botherref #1{#1}\fi
\ifx \url \undefined \def \url#1{\textsf{#1}}\fi
\ifx \bchapter \undefined \def \bchapter#1{#1}\fi
\ifx \bbook \undefined \def \bbook#1{#1}\fi
\ifx \bcomment \undefined \def \bcomment#1{#1}\fi
\ifx \oauthor \undefined \def \oauthor#1{#1}\fi
\ifx \citeauthoryear \undefined \def \citeauthoryear#1{#1}\fi
\ifx \endbibitem  \undefined \def \endbibitem {}\fi
\ifx \bconflocation  \undefined \def \bconflocation#1{#1}\fi
\ifx \arxivurl  \undefined \def \arxivurl#1{\textsf{#1}}\fi
\csname PreBibitemsHook\endcsname

\bibitem{nostro}
\begin{bchapter}
\bauthor{\bsnm{Del~Buono}, \binits{N.}},
\bauthor{\bsnm{Esposito}, \binits{F.}},
\bauthor{\bsnm{Selicato}, \binits{L.}}:
\bctitle{Methods for hyperparameters optimization in learning approaches: An
  overview}.
In: \bbtitle{International Conference on Machine Learning, Optimization, and
  Data Science},
pp. \bfpage{100}--\blpage{112}
(\byear{2020}).
\bcomment{Springer}
\end{bchapter}
\endbibitem

\bibitem{franceschi2021unified}
\begin{botherref}
\oauthor{\bsnm{Franceschi}, \binits{L.}}:
A unified framework for gradient-based hyperparameter optimization and
  meta-learning.
PhD thesis,
UCL (University College London)
(2021)
\end{botherref}
\endbibitem

\bibitem{feurer2019hyperparameter}
\begin{bbook}
\bauthor{\bsnm{Feurer}, \binits{M.}},
\bauthor{\bsnm{Hutter}, \binits{F.}}:
\bbtitle{Hyperparameter Optimization},
pp. \bfpage{3}--\blpage{33}.
\bpublisher{Springer},
\blocation{USA}
(\byear{2019})
\end{bbook}
\endbibitem

\bibitem{bard2013practical}
\begin{bbook}
\bauthor{\bsnm{Bard}, \binits{J.F.}}:
\bbtitle{Practical Bilevel Optimization: Algorithms and Applications}
vol. \bseriesno{30}.
\bpublisher{Springer},
\blocation{Boston}
(\byear{2013})
\end{bbook}
\endbibitem

\bibitem{colson2007overview}
\begin{barticle}
\bauthor{\bsnm{Colson}, \binits{B.}},
\bauthor{\bsnm{Marcotte}, \binits{P.}},
\bauthor{\bsnm{Savard}, \binits{G.}}:
\batitle{An overview of bilevel optimization}.
\bjtitle{Annals of operations research}
\bvolume{153}(\bissue{1}),
\bfpage{235}--\blpage{256}
(\byear{2007})
\end{barticle}
\endbibitem

\bibitem{bertrand2020implicit}
\begin{bchapter}
\bauthor{\bsnm{Bertrand}, \binits{Q.}},
\bauthor{\bsnm{Klopfenstein}, \binits{Q.}},
\bauthor{\bsnm{Blondel}, \binits{M.}},
\bauthor{\bsnm{Vaiter}, \binits{S.}},
\bauthor{\bsnm{Gramfort}, \binits{A.}},
\bauthor{\bsnm{Salmon}, \binits{J.}}:
\bctitle{Implicit differentiation of lasso-type models for hyperparameter
  optimization}.
In: \bbtitle{International Conference on Machine Learning},
pp. \bfpage{810}--\blpage{821}
(\byear{2020}).
\bcomment{PMLR}
\end{bchapter}
\endbibitem

\bibitem{phdfranceschi}
\begin{botherref}
\oauthor{\bsnm{Franceschi}, \binits{L.}}:
A unified framework for gradient-based hyperparameter optimization and
  meta-learning.
PhD thesis,
University College London
(2021)
\end{botherref}
\endbibitem

\bibitem{nostro_Z}
\begin{botherref}
\oauthor{\bsnm{{Del Buono}}, \binits{N.}},
\oauthor{\bsnm{Esposito}, \binits{F.}},
\oauthor{\bsnm{Selicato}, \binits{L.}},
\oauthor{\bsnm{Zdunek}, \binits{R.}}:
Optimizing Penalization Hyperparameters in nonnegative matrix factorizations
  problems.
Preprint at \url{http://arxiv.org/abs/2203.13129}
(2022)
\end{botherref}
\endbibitem

\bibitem{franceschi2017forward}
\begin{bchapter}
\bauthor{\bsnm{Franceschi}, \binits{L.}},
\bauthor{\bsnm{Donini}, \binits{M.}},
\bauthor{\bsnm{Frasconi}, \binits{P.}},
\bauthor{\bsnm{Pontil}, \binits{M.}}:
\bctitle{Forward and reverse gradient-based hyperparameter optimization}.
In: \bbtitle{International Conference on Machine Learning},
pp. \bfpage{1165}--\blpage{1173}
(\byear{2017}).
\bcomment{PMLR}
\end{bchapter}
\endbibitem

\bibitem{vincent2018online}
\begin{botherref}
\oauthor{\bsnm{Vincent}, \binits{D.}},
\oauthor{\bsnm{Gelly}, \binits{S.}},
\oauthor{\bsnm{Le~Roux}, \binits{N.}},
\oauthor{\bsnm{Bousquet}, \binits{O.}}:
Online hyper-parameter optimization
(2018)
\end{botherref}
\endbibitem

\bibitem{fra2}
\begin{bchapter}
\bauthor{\bsnm{Franceschi}, \binits{L.}},
\bauthor{\bsnm{Frasconi}, \binits{P.}},
\bauthor{\bsnm{Salzo}, \binits{S.}},
\bauthor{\bsnm{Grazzi}, \binits{R.}},
\bauthor{\bsnm{Pontil}, \binits{M.}}:
\bctitle{Bilevel programming for hyperparameter optimization and
  meta-learning}.
In: \bbtitle{International Conference on Machine Learning},
pp. \bfpage{1568}--\blpage{1577}
(\byear{2018}).
\bcomment{PMLR}
\end{bchapter}
\endbibitem

\bibitem{Mc}
\begin{bchapter}
\bauthor{\bsnm{Maclaurin}, \binits{D.}},
\bauthor{\bsnm{Duvenaud}, \binits{D.}},
\bauthor{\bsnm{Adams}, \binits{R.}}:
\bctitle{Gradient-based hyperparameter optimization through reversible
  learning}.
In: \bbtitle{Proc. of ICML},
pp. \bfpage{2113}--\blpage{2122}
(\byear{2015})
\end{bchapter}
\endbibitem

\bibitem{Pedro}
\begin{bchapter}
\bauthor{\bsnm{Pedregosa}, \binits{F.}}:
\bctitle{Hyperparameter optimization with approximate gradient}.
In: \bbtitle{ICML},
pp. \bfpage{737}--\blpage{746}
(\byear{2016}).
\bcomment{PMLR}
\end{bchapter}
\endbibitem

\bibitem{lorraine2020optimizing}
\begin{bchapter}
\bauthor{\bsnm{Lorraine}, \binits{J.}},
\bauthor{\bsnm{Vicol}, \binits{P.}},
\bauthor{\bsnm{Duvenaud}, \binits{D.}}:
\bctitle{Optimizing millions of hyperparameters by implicit differentiation}.
In: \bbtitle{International Conference on Artificial Intelligence and
  Statistics},
pp. \bfpage{1540}--\blpage{1552}
(\byear{2020}).
\bcomment{PMLR}
\end{bchapter}
\endbibitem

\bibitem{ekeland1974variational}
\begin{barticle}
\bauthor{\bsnm{Ekeland}, \binits{I.}}:
\batitle{On the variational principle}.
\bjtitle{Journal of Mathematical Analysis and Applications}
\bvolume{47}(\bissue{2}),
\bfpage{324}--\blpage{353}
(\byear{1974})
\end{barticle}
\endbibitem

\bibitem{walter1987real}
\begin{bbook}
\bauthor{\bsnm{Rudin}, \binits{W.}}:
\bbtitle{Real and Complex Analysis},
(\byear{1987})
\end{bbook}
\endbibitem

\bibitem{drusvyatskiy2019nonsmooth}
\begin{botherref}
\oauthor{\bsnm{Drusvyatskiy}, \binits{D.}},
\oauthor{\bsnm{Ioffe}, \binits{A.D.}},
\oauthor{\bsnm{Lewis}, \binits{A.S.}}:
Nonsmooth optimization using taylor-like models: error bounds, convergence, and
  termination criteria.
Mathematical Programming,
1--27
(2019)
\end{botherref}
\endbibitem

\bibitem{rossi2005classification}
\begin{botherref}
\oauthor{\bsnm{Rossi}, \binits{F.}},
\oauthor{\bsnm{Villa}, \binits{N.}}:
Classification in hilbert spaces with support vector machines.
Proceedings of ASMDA,
635--642
(2005)
\end{botherref}
\endbibitem

\bibitem{ying2016foundations}
\begin{bbook}
\bauthor{\bsnm{Ying}, \binits{M.}}:
\bbtitle{Foundations of Quantum Programming},
(\byear{2016})
\end{bbook}
\endbibitem

\bibitem{harandi2014bregman}
\begin{bchapter}
\bauthor{\bsnm{Harandi}, \binits{M.}},
\bauthor{\bsnm{Salzmann}, \binits{M.}},
\bauthor{\bsnm{Porikli}, \binits{F.}}:
\bctitle{Bregman divergences for infinite dimensional covariance matrices}.
In: \bbtitle{Proceedings of the IEEE Conference on Computer Vision and Pattern
  Recognition},
pp. \bfpage{1003}--\blpage{1010}
(\byear{2014})
\end{bchapter}
\endbibitem

\end{thebibliography}

\end{document}